\newtheorem{theorem}{Theorem}[section]
\newtheorem{conjecture}[theorem]{Conjecture}
\newtheorem{corollary}[theorem]{Corollary}
\newtheorem{proposition}[theorem]{Proposition}
\newtheorem{question}[theorem]{Question}
\newtheorem{claim}{Claim}
\newcommand\DELETE[1]{}
\newcommand\MONNAESATPB{\textsc{Monotone Not-All-Equal $3$-Satisfiability}}
\newcommand\SSCLIQUEPB{\textsc{Signed Clique Edge-Color Assignment}}
\newcommand\NPclass{\textsf{NP}}
\begin{document}

\title{{\bf Analogous to cliques for $(m,n)$-colored mixed graphs}}
\author{
{\sc Julien Bensmail}$^{a}$, {\sc Christopher Duffy}$^{b,d}$, {\sc Sagnik Sen}$^{b,c}$ \\
\mbox{}\\
{\small $(a)$ CNRS, ENS de Lyon, LIP, \'Equipe MC2, INRIA, Universit\'e Lyon 1, Universit\'e de Lyon, France}\\
{\small $(b)$ Univ. Bordeaux, LaBRI, UMR 5800, F-33400 Talence, France}\\
{\small $(c)$ Indian Statistical Institute, Kolkata, India}\\
{\small $(d)$ University of Victoria, Canada}
}

\date{\today}

\maketitle

\begin{abstract}
A $(m,n)$-colored mixed graph is a graph having arcs of $m$ different colors and edges of $n$ different colors. 
A graph homomorphism of a $(m,n)$-colored mixed graph $G$ to a $(m,n)$-colored mixed graph $H$ is a vertex mapping such that if $uv$ is an arc (edge) of color $c$ in $G$, then $f(u)f(v)$ is also an arc (edge) of color $c$. 
The \textit{$(m,n)$-colored mixed chromatic number} $\chi_{(m,n)}(G)$ of a   $(m,n)$-colored mixed graph $G$, introduced by 
 Ne\v{s}et\v{r}il and Raspaud, is the 
order (number of vertices)
of the smallest homomorphic image of $G$.

In this article, we define the analogue of clique  for $(m,n)$-colored mixed graphs. A $(m,n)$-clique is 
a $(m,n)$-colored mixed graph $C$ with $\chi_{(m,n)}(C) = |C|$. We show that almost all  
$(m,n)$-colored mixed graphs are $(m,n)$-cliques. We prove exact bound on the order of the biggest 
outerplanar $(m,n)$-clique and provide upper and lower bounds for 
the order of the biggest 
planar $(m,n)$-clique. Furthermore, we address a particular complexity problem related to $(0,2)$-colored mixed graphs and compare 
the parameters  $\chi_{(1,0)}$ and $\chi_{(0,2)}$. 
 \end{abstract}

\noindent \textbf{Keywords:} colored mixed graphs, signed graphs, graph homomorphism, chromatic number, clique number, planar graphs.

\section{Introduction}
The notion of vertex coloring and chromatic number was studied in a  generalized set up by Ne\v{s}et\v{r}il and Raspaud~\cite{raspaud_and_nesetril}
by definining colored homomorphism for $(m,n)$-colored mixed graphs. In this article we will define and study the notion of cliques and clique numbers for the same generalized type of graphs.

A \textit{$(m,n)$-colored mixed graph} $G = (V, A \cup E)$ is a graph $G$ with set of vertices $V$, set of arcs $A$ and set of edges $E$ where 
arcs are colored with $m$ colors and the edges are colored with $n$ colors and the underlying undirected graph is simple.  
Let $G = (V_1, A_1 \cup E_1)$ and $H = (V_2, A_2 \cup E_2)$
be two $(m,n)$-colored mixed graphs. A colored homomorphism of $G$ to $H$ is a function $f : V_1 \rightarrow V_2$ satisfying
$$uv \in A_1 \Rightarrow f(u)f(v) \in A_2,$$ 
$$uv \in E_1 \Rightarrow f(u)f(v) \in E_2,$$
and the color of the arc or  edge linking $f(u)$ and $f(v)$ is the same as the color of the arc or the edge linking $u$ and $v$~\cite{raspaud_and_nesetril}.
  We write $ G \rightarrow  H$ whenever there exists a 
 homomorphism of $ G$ to $ H$.

Given a $(m,n)$-colored mixed graph $G$ let $H$ be a $(m,n)$-colored mixed graph with minimum \textit{order} (number of vertices) such that $G \rightarrow H$. 
Then the order of $H$ is the \textit{$(m,n)$-colored mixed chromatic number} $\chi_{(m,n)}(G)$ of $G$.
The maximum $(m,n)$-colored mixed chromatic number taken over all $(m, n)$-colored mixed
graphs having underlying undirected simple graph $G$ is denoted by  $\chi_{(m,n)}(G)$.
Let $\mathcal{F}$ be a family of undirected simple graphs. Then  $\chi_{(m,n)}(\mathcal{F})$ is the maximum of $\chi_{(m,n)}(G)$ 
taken over all $G \in \mathcal{F}$.

Note that a $(0,1)$-colored mixed graph $G$  is nothing but an undirected simple graph while  
$\chi_{(0,1)}(G)$ is the ordinary chromatic number. 
Similarly, the study of $\chi_{(1,0)}(G)$ 
is  the study of oriented chromatic number which is considered by several researchers in the last two decades (for details please check the recent updated survey~\cite{sopena_updated_survey}).
Alon and Marshall~\cite{Marshall-edgecoloring}  studied the homomorphism of $(0,m)$-colored mixed graphs with a particular focus on $m=2$.

Ne\v{s}et\v{r}il and Raspaud~\cite{raspaud_and_nesetril} showed that $\chi_{(m,n)}(G) \leq k(2m+n)^{k-1}$ where $G$ is a $k$-acyclic colorable graph.
As planar graphs are $5$-acyclic colorable due to Borodin~\cite{Borodinacyclic} , the same authors implied  $\chi_{(m,n)}(\mathcal{P}) \leq 5(2m+n)^4$ for the family $\mathcal{P}$ of planar graphs as a corollary.
This result, in particular, implies $\chi_{(1,0)}(\mathcal{P}) \leq 80$ and $\chi_{(0,2)}(\mathcal{P}) \leq 80$ (independently proved 
before in~\cite{planar80} and~\cite{Marshall-edgecoloring}, respectively). 
It is observed that using similar techniques the same or close upper and lower bounds can be proved for both $\chi_{(1,0)}(\mathcal{F})$ and $\chi_{(0,2)}(\mathcal{F})$ for several graph families (details in Section~\ref{o-vs-s}). This made us wonder if there is an underlying general relation between the two kinds of graph colorings. 
Moreover, note that in the main results, each colored arcs are contributing twice as much  each colored edge. It is natural to wonder if this is 
a universal trend or not. 
In this article we explore this speculation and actually end up proving the opposite, that is, we construct examples of undirected simple graphs for which 
$\chi_{(0,2)}(G) - \chi_{(1,0)}(G)$ is arbitrarily high or low. 

%

A \textit{$(m,n)$-clique} $C$ is a $(m,n)$-colored mixed graph with $\chi_{(m,n)}(C) = |C|$ (where $|C|$ is the order of the graph $C$). 
The problem is to find a $(m,n)$-clique with  maximum order contained as a subgraph in a $(m,n)$-colored mixed graph $G$. 
That maximum order is the \textit{$(m,n)$-absolute clique number} $\omega_{a(m,n)}(G)$ of  $G$. 
Our definition is a generalization using the intuition of oriented clique~\cite{36} and oriented absolute clique number~\cite{unique_oclique} 
which is the case when $(m,n)= (1,0)$. 
Note that $\omega_{a(0,1)}(G)$ is just the ordinary clique number of a simple undirected graph $G$.

While studying oriented absolute clique number, another related parameter, the oriented relative clique number, arose naturally~\cite{unique_oclique}. 
An analogous parameter seems significant for the study of homomorphisms of $(m,n)$-colored mixed graphs as well. 
A \textit{relative $(m,n)$-clique} $R$ of a $(m,n)$-colored mixed graph $G$ 
is a set of vertices 
such that for any two distinct vertices $u,v \in R$ we have $f(u) \neq f(v)$ for each homomorphism $f :G \rightarrow H$ and for each 
$(m,n)$-colored mixed graph $H$. 
That is, no two distinct vertices of a relative clique can be identified under any homomorphism. 
 The 
\textit{$(m,n)$-reletive clique number} $\omega_{r(m,n)}(G)$ of a $(m,n)$-colored mixed graph $G$ is
 the cardinality of a largest  relative $(m,n)$-clique of $G$. 
For undirected simple graphs the two parameters coincide but they are different for all $(m,n) \neq (0,1)$.
The two parameters for an undirected simple graph and for a family of graphs are  defined similarly as in the case of $(m,n)$-colored mixed chromatic number.

From the definitions it is clear that $\omega_{a(m,n)}(G) \leq \omega_{r(m,n)}(G) \leq \chi_{(m,n)}(G)$ for any $(m,n)$-colored mixed graph $G$. 
We will show that $(m,n)$-cliques are not rare objects by proving that 
almost all $(m,n)$-colored mixed graphs are $(m,n)$-cliques.

%

We also studied the parameter for the family $\mathcal{O}$ of outerplanar and $\mathcal{P}$ of planar graphs 
and provide exact bound of  $\omega_{a(m,n)}(\mathcal{O}) = \omega_{r(m,n)}(\mathcal{O}) = 3(2m+n) + 1$ for outerplanar graph and 
quadratic upper and lower bound of $3(2m+n)^2+(2m+n)+1 \leq \omega_{a(m,n)}(\mathcal{P}) \leq 9(2m+n)^2 + 2(2m+n) + 2$ 
for planar graphs. 
Note that the present lower and upper bounds for the $(m,n)$-colored mixed chromatic number of these two families 
are $(2m+n)^2+ \epsilon (2m+n)+1 \leq \chi_{(m,n)}(\mathcal{O}) \leq 3(2m+n)^2$
and $(2m+n)^3 +\epsilon (2m+n)^2 + (2m+n) + \epsilon \leq \chi_{(m,n)}(\mathcal{P}) \leq 5(2m+n)^4$ where $\epsilon = 1$ for $m$ odd or 
$m = 0$, and $\epsilon = 2$ for $m > 0$ even~\cite{homsignified}.

%
%

Given an undirected simple graph it is $NP$-hard to decide if it is underlying graph of an $(m,n)$-clique for $(m,n) = (1,0)$ and $(0,2)$ 
is known~\cite{oclique-complexity}.  We prove a related similar result for an equivalence class of $(0,2)$-colored mixed graph. 

\bigskip

 \textbf{An important  related question:} Naserasr, Rollova and Sopena~\cite{signedhom} recently studied the homomorphism of a particular equivalence class of $(0,2)$-graphs which they called signed graphs. 
Using these notions they managed to reformulate and extend several classical theorems and conjectures including the Four-Color Theorem and the Hadwieger's conjecture. They also defined absolute and relative clique number of signed graphs in a similar fashion. 
We will avoid the definitions related to homomorphisms of signed graphs as it will make this article unnesessarily complicated.
The specific question that we are interested in can be formulated using the notions already defined in this paper. The reader is encouraged to read the above mentioned paper for further details. 
In the ``2$^{nd}$ Autumn meeting  on signed graphs'' (Th\'ezac, France 2013) organized by  Naserasr and Sopena the following question was asked by Naserasr:

\begin{question}\label{q reza}
A $(0,2)$-colored mixed graph is an signed clique if each pair of its non-adjacent vertices 
 is part of a 4-cycle with three edges of the same color while the other edge has a different color. Given an undirected simple graph $G$ what is the complexity of deciding if it is an underlying graph of a signed clique?
\end{question}

Here we prove that this is an  $NP$-complete problem.

\section{On absolute and relative $(m,n)$-clique numbers}\label{k-edge-colored}
First we will characterize the $(m,n)$-cliques.  Let $G$ be an $(m,n)$-colored mixed graph. Let $uvw$ be a 2-path in the underlying simple graph of $G$. Then $uvw$ is a \textit{special 2-path}  of $G$ if one of the following holds:

\begin{itemize}
\item[(i)] $uv$ and $vw$ are edges of different colors,

\item[(ii)] $uv$ and $vw$ are arcs (possibly of the same color),

\item[(iii)] $uv$ and $wv$ are arcs of different colors,

\item[(iv)] $vu$ and $vw$ are arcs of different colors,

\item[(v)] exactly one of $uv$ and $vw$ is an edge.
\end{itemize}

\begin{proposition}\label{prop clique1}
Let $G$ be an $(m,n)$-colored mixed graph. Then two vertices $u,v$ of $G$ are part of a relative clique if and only if either they are adjacent or they are connected by a special 2-path. 
\end{proposition}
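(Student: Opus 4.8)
The plan is to first reduce the statement to a cleaner form: a pair $\{u,v\}$ is contained in some relative clique if and only if $u$ and $v$ can never be identified by a homomorphism. Indeed, if no homomorphism $f$ (to any $(m,n)$-colored mixed graph $H$) satisfies $f(u)=f(v)$, then $\{u,v\}$ is itself a relative clique containing both; conversely, any relative clique containing both $u$ and $v$ forbids their identification by definition. So it suffices to prove that no homomorphism identifies $u$ and $v$ precisely when they are adjacent or joined by a special $2$-path.

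For the sufficiency direction I would show that each listed situation forces a contradiction once $f(u)=f(v)$. If $u,v$ are adjacent, the arc or edge $uv$ maps to a loop at $f(u)$, impossible since $H$ has simple underlying graph. If $u,v$ are the endpoints of a special $2$-path through a middle vertex $w$, I run through the five cases: in case (i) the edges $uw,wv$ collapse onto one edge of $H$ forced to carry two distinct colors; in cases (iii) and (iv) the same clash arises for two arcs sharing a head or a tail; in case (ii) the directed path $u\to w\to v$ yields arcs $f(u)\to f(w)$ and $f(w)\to f(u)$, a digon violating simplicity; and in case (v) the identified endpoint is joined to $f(w)$ by both an edge and an arc, again violating simplicity. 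Each case is routine once the simple-underlying-graph and single-color conventions are invoked.

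For the necessity direction I would prove the contrapositive by an explicit construction. Assuming $u,v$ are non-adjacent and no special $2$-path joins them, let $H$ be obtained from $G$ by identifying $u$ and $v$ into one vertex, with $f$ the quotient map. The key step is to verify that $H$ is a legitimate $(m,n)$-colored mixed graph, that is, its underlying graph remains simple and every link keeps a single well-defined color. No loop appears because $u,v$ are non-adjacent, so the only danger occurs at a common neighbour $w$, where the links $uw$ and $wv$ must collapse onto one link of $H$. Here I invoke the observation driving the whole proposition: the $2$-path $uwv$ fails to be special exactly when $uw$ and $wv$ have the same type, the same orientation relative to $w$, and the same color, which is precisely when they merge into one consistent link. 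Since by hypothesis no such $uwv$ is special, every pair of parallel links collapses consistently, $H$ is well defined, $f$ preserves all arcs, edges, and their colors, and $f(u)=f(v)$.

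The main obstacle I anticipate is establishing this last equivalence cleanly, namely that the simultaneous negation of conditions (i)--(v) is equivalent to being ``mergeable'' at every common neighbour. This requires an exhaustive enumeration over whether each of $uw,wv$ is an edge or an arc and, for arcs, over the four orientation patterns. Particular care is needed for the symmetry of reading a $2$-path from either end, so that a directed path $v\to w\to u$ is recognised as case (ii) just as $u\to w\to v$ is, and for distinguishing genuine color clashes from harmless repetitions. Once this bookkeeping is complete, both directions close immediately.
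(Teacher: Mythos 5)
Your proposal is correct and follows essentially the same route as the paper: adjacency or a special $2$-path forces a loop or a multiple/inconsistent link in any image identifying $u$ and $v$, and in the absence of both one simply identifies $u$ and $v$ and merges the resulting parallel links (which the non-specialness of every $2$-path guarantees are of the same type, orientation, and color) to exhibit the required homomorphism. Your explicit five-case check and the mergeability observation are just a more detailed write-up of the paper's argument.
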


\begin{proof}
Let $u,v$ be two vertices of a $(m,n)$-colored mixed graph $G$. If they are not part of any relative clique then there exists a $(m,n)$-colored mixed graph $H$ and a homomorphism $f: G \rightarrow H$ such that $f(u) = f(v)$. 

Firstly, suppose that $u,v$ are adjacent. Then the image $f(u)$ of $u$ and $v$ will induce a loop in the underlying graph of $H$. 
Secondly, suppose that $u,v$ are connected by a special 2-path. Then the image $f(u)$ of $u$ and $v$ will induce multiple edges in the underlying graph of $H$. But we are working with $(m,n)$-colored mixed graphs with underlying simple graphs. Hence, $u,v$ can neither be  adjacent, nor be connected by a special 2-path.

Now assume that $u,v$ are neither   adjacent, nor  connected by a special 2-path. Then simply identify the vertices $u,v$ of $G$ to obtain another
$(m,n)$-colored mixed graph $H'$. 
Call this identified new vertex $u$ for convenience. 
If in $H'$ there are more than one edge of the same color between two vertices, delete all of them but one. 
Also,  if in $H'$ there are more than one arc of the same color in the same direction between two vertices, delete all of them but one. Doing this we obtain a new $(m,n)$-colored mixed graph $H$ from $H'$.
 Note that the underlying graph of $H$ is simple.  
 Also it is easy to see that $g: G \rightarrow H$ where $g(x) = x$ for $x \neq u,v$ and $g(u) = g(v) = u$ is a homomorphism. 
\end{proof}

\begin{corollary}
Let $G$ be an $(m,n)$-colored mixed graph. Then $G$ is a $(m,n)$-clique if and only if each pair of non-adjacent vertices of $G$ 
is connected by a special 2-path.
\end{corollary}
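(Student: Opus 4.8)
The plan is to deduce this directly from Proposition~\ref{prop clique1}, the only real work being to translate the absolute-clique condition $\chi_{(m,n)}(G) = |G|$ into a statement about which vertices can be identified under homomorphisms. First I would record the basic reformulation: since the identity map witnesses $\chi_{(m,n)}(G) \leq |G|$, and since we may always take the target to be the image of $G$, we have $\chi_{(m,n)}(G) = \min_f |f(V(G))|$ over all homomorphisms $f$ from $G$. A homomorphism has image of size $|G|$ exactly when it is injective, so $\chi_{(m,n)}(G) = |G|$ if and only if every homomorphism $f : G \to H$ is injective on vertices; equivalently, if and only if no homomorphism identifies any two distinct vertices of $G$.

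Next I would bridge this injectivity condition to the relative-clique machinery. Because the defining property of a relative clique is checked pairwise, the statement ``no homomorphism identifies any two distinct vertices'' is precisely the statement that the whole vertex set $V(G)$ is a relative clique, and this in turn holds if and only if every pair of distinct vertices $u,v$ is part of some relative clique: indeed, $\{u,v\}$ is itself a relative clique exactly when no homomorphism sends $u$ and $v$ to a common image. Combining these two observations, $G$ is a $(m,n)$-clique if and only if every pair of distinct vertices of $G$ is part of a relative clique.

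Finally I would apply Proposition~\ref{prop clique1} pairwise: a pair $u,v$ is part of a relative clique if and only if $u$ and $v$ are adjacent or connected by a special 2-path. For adjacent pairs the condition is automatically satisfied, so the requirement imposes nothing new there, and the entire content collapses to the demand that every \emph{non}-adjacent pair be connected by a special 2-path. This is exactly the claimed characterization.

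The step I would be most careful about — and the only genuine subtlety — is the first equivalence, relating the chromatic-number condition to the non-identifiability of all pairs, since it is what ties the absolute-clique definition $\chi_{(m,n)}(C)=|C|$ together with the relative-clique notion underlying Proposition~\ref{prop clique1}. Once that equivalence between ``the smallest homomorphic image has $|G|$ vertices'' and ``every pair of distinct vertices lies in a relative clique'' is in hand, the remainder is a purely mechanical pairwise invocation of the proposition, with no computation required.
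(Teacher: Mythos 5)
Your proof is correct and follows essentially the same route as the paper: the paper's own proof simply observes that $G$ is a $(m,n)$-clique iff its whole vertex set is a relative clique and then cites Proposition~\ref{prop clique1}, which is exactly your argument with the "it follows from the definitions" step spelled out in more detail. The extra care you take over the equivalence between $\chi_{(m,n)}(G)=|G|$ and injectivity of every homomorphism is sound and fills in precisely what the paper leaves implicit.
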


\begin{proof}
It follows from the definitions that $(m,n)$-colored mixed graph $G$ is a  $(m,n)$-clique if and only if  all its vertices are part of a relative clique. Then the result follows by Proposition~\ref{prop clique1}.
\end{proof}

Using these characterizations we will first prove our asymptotic result. 
	Consider  the model of generating a random $(m,n)$-mixed graph on $k$ vertices in which the adjacency type between each pair of vertices -- one of the $m$ arc types in forward or backward direction, one of the $n$ edge types or non-adjacent -- is chosen uniformly at random. We show that under such a model, as $k \rightarrow \infty$ the probability of generating an $(m,n)$-mixed clique approaches $1$.

	\begin{theorem}
	For $(m,n) \neq (0,1)$ almost every graph is an $(m,n)$-mixed clique.
\end{theorem}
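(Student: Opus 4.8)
The plan is to reduce everything to the characterization in the preceding corollary: a $(m,n)$-colored mixed graph $G$ is an $(m,n)$-clique exactly when every pair of non-adjacent vertices is joined by a special $2$-path. Thus it suffices to show that, in the random model described above, the probability that some non-adjacent pair fails to have a special $2$-path tends to $0$ as $k\to\infty$. I would attack this by a first-moment (union-bound) argument over the $\binom{k}{2}$ vertex pairs. Write $t=2m+n$ for the number of non-null link types, so that each pair receives one of $t+1$ equally likely labels (the $t$ arc/edge types, counting both orientations of each of the $m$ arc colors, plus ``non-adjacent'').

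The key computation is to estimate, for a fixed pair $\{u,v\}$ and a fixed third vertex $w$, the probability $p$ that the $2$-path $u\text{-}w\text{-}v$ is special. Reading conditions (i)--(v) from the point of view of the middle vertex $w$, the two links $uw$ and $wv$ fail to create a special $2$-path precisely when, after identifying $u$ and $v$, they collapse into a single link -- that is, when they are identical as seen from $w$: both edges of the same color, or both arcs of the same color and the same orientation relative to $w$. There are exactly $t$ such ``mergeable'' configurations among the $t^2$ choices in which both links are present, so given that both links are present the path is special with probability $(t-1)/t$. Since each link is present with probability $t/(t+1)$ and the two links are independent,
\[
p=\left(\frac{t}{t+1}\right)^{2}\cdot\frac{t-1}{t}=\frac{t(t-1)}{(t+1)^{2}}.
\]
The crucial observation is that $p>0$ if and only if $t\ge 2$, which is exactly the condition $(m,n)\neq(0,1)$ (the value $t=1$ forces $m=0,\,n=1$ and makes every $2$-path non-special); for all admissible $(m,n)$ one also checks $p<1$, so $0<p<1$ is a constant independent of $k$.

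To finish, note that for distinct middle vertices $w$ the events ``$u\text{-}w\text{-}v$ is special'' depend on the pairwise-disjoint label sets $\{uw,vw\}$ and hence are mutually independent, and are independent of the label of $\{u,v\}$ itself. Consequently the probability that $\{u,v\}$ is non-adjacent yet joined by no special $2$-path is $\tfrac{1}{t+1}(1-p)^{k-2}$, and a union bound gives
\[
\Pr[\,G\text{ is not an }(m,n)\text{-clique}\,]\le\binom{k}{2}(1-p)^{k-2}\longrightarrow 0,
\]
since the exponential decay of $(1-p)^{k-2}$ beats the polynomial factor $\binom{k}{2}$. Hence $G$ is an $(m,n)$-clique with probability tending to $1$.

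I expect the main obstacle to be the combinatorial bookkeeping of the second paragraph: correctly translating the five defining cases of a special $2$-path into the single ``mergeability'' criterion at the middle vertex, thereby obtaining the exact count $t(t-1)$ of special configurations together with the clean dichotomy $p>0\iff(m,n)\neq(0,1)$. The probabilistic steps (independence across middle vertices and the first-moment bound) are routine once $p$ is pinned down as a positive constant.
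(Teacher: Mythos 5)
Your argument is correct and is essentially the paper's proof recast in probabilistic language: both reduce to the special-$2$-path characterization and apply a union bound over the $\binom{k}{2}$ pairs, with your per-middle-vertex failure probability $1-p=\frac{3t+1}{(t+1)^2}$ corresponding exactly to the paper's count of $6m+3n+1$ admissible link-pairs out of $(2m+n+1)^2$. If anything your version is the cleaner one, since the paper's displayed count $(k-2)(6m+3n+1)(2m+n+1)^{\binom{k-2}{2}}$ should read $(6m+3n+1)^{k-2}(2m+n+1)^{\binom{k-2}{2}}$, which is precisely what your factor $(1-p)^{k-2}$ encodes.
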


\begin{proof}	Let $\mathcal{G}_k$ be the set of all $(m,n)$-mixed graphs on $k$ vertices (where $(m,n)  \neq (0,1)$), and let $\mathcal{C}_k$ be the set of all $(m,n)$-cliques on $k$ vertices. 
	
	$$|\mathcal{G}_k| = (2m+n+1)^{k \choose 2}.$$
	
	Consider generating an $(m,n)$-mixed graph, $G$, on $k$ vertices such that  a particular pair of vertices $u,v \in V(G)$ are neither adjacent nor connected by a special 2-path. 
			An easy counting argument gives that the number of such graphs is:
	
	$$ (k-2)(6m + 3n + 1)(2m+n+1)^{{k-2} \choose 2}.$$
	
	This is seen by considering the possibilities for the different sorts of adjacencies that can occur between $u,v$ and each of the other $k-2$ vertices. This implies directly the following upper bound on the cardinality $|\overline{\mathcal{C}_k}|$
	of the set of 
	$(m,n)$-mixed graph that are not $(m,n)$-cliques. 
	
	$$|\overline{\mathcal{C}_k}| < {k \choose 2}(k-2)(6m + 3n + 1)(2m+n+1)^{{k-2} \choose 2}.$$
	
	For fixed values of $m$ and $n$:
	
	$$\lim_{k \rightarrow \infty} \frac{ {k \choose 2}(k-2)(6m + 3n + 1)(2m+n+1)^{{k-2} \choose 2}}{(2m+n+1)^{k \choose 2}} = 0.$$
	
	Thus,
	
	$$ \lim_{k \rightarrow \infty} \frac{|\overline{\mathcal{C}_k}| }{|\mathcal{G}_k|} = 0.$$
	
	Therefore, as $k$ grows large, the ratio of the number of $(m,n)$-mixed cliques on $k$ vertices to the number of $(m,n)$-mixed graphs on $k$ vertices approaches $1$. 
	\end{proof}

%

Now we will prove our result for the family $ \mathcal{O}$ of outerplanar graphs. 

\begin{theorem}\label{n-edge-colored outer clique}
Let $\mathcal{O}$ be the family of outerplanar graphs. Then $\omega_{a(m,n)}(\mathcal{O}) = \omega_{r(m,n)}(\mathcal{O}) = 3(2m+n) + 1$ for all
$(m,n) \neq (0,1)$.
\end{theorem}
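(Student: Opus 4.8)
The plan is to prove the two bounds $\omega_{a(m,n)}(\mathcal{O}) \ge 3(2m+n)+1$ and $\omega_{r(m,n)}(\mathcal{O}) \le 3(2m+n)+1$ separately; since $\omega_{a(m,n)}(G) \le \omega_{r(m,n)}(G)$ holds for every $G$, these two inequalities sandwich both parameters at the common value $3(2m+n)+1$. Throughout I write $t = 2m+n$ for the number of adjacency types, one for each edge colour and one for each of the two orientations of each arc colour. For the lower bound it suffices to exhibit a single outerplanar $(m,n)$-clique on $3t+1$ vertices, because for an $(m,n)$-clique $C$ the whole vertex set is an absolute clique, so $\omega_{a(m,n)}(C)=|C|$.

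For the lower bound I would take the fan $F = K_1 + P_{3t}$, with apex $v$ adjacent to the $3t$ path vertices $u_1 u_2 \cdots u_{3t}$; this graph is maximal outerplanar. Colour the spokes $vu_i$ so that types are assigned in blocks of three, i.e.\ all spokes in a block $\{u_{3j+1},u_{3j+2},u_{3j+3}\}$ share one type and distinct blocks use distinct types, and colour the two path edges inside each block so that they form a special $2$-path (two arcs in series, or two edges of different colours; this is possible precisely because $(m,n)\neq(0,1)$). Using the corollary following Proposition~\ref{prop clique1}, I would check that every non-adjacent pair $u_i,u_j$ is joined by a special $2$-path: if $|i-j|=2$ the pair lies in one block and is joined through the intermediate path vertex, while if $|i-j|\ge 3$ the two spokes $vu_i,vu_j$ carry distinct types, so $u_i v u_j$ is a special $2$-path. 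The computation behind this, which also pins down the constant, is that a $2$-path $u_i v u_j$ through the apex is special exactly when the two spokes have distinct types; hence no type may be reused at apex-distance $\ge 3$, forcing blocks of size at most three and giving exactly $3t$ path vertices.

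For the upper bound let $R$ be a relative $(m,n)$-clique in an outerplanar graph $G$ and fix $v\in R$. To each other vertex $u\in R$ I would assign a type in $\{1,\dots,t\}$: if $u$ is adjacent to $v$, the type of the arc/edge $vu$; otherwise, fixing a special $2$-path $v\,x_u\,u$ guaranteed by Proposition~\ref{prop clique1}, the type of $vx_u$. The target is the sub-claim that each type is assigned to at most three vertices, which yields $|R|-1\le 3t$ and hence $|R|\le 3t+1$. The engine of this sub-claim is the colour constraint extracted from the case analysis of special $2$-paths: through any single vertex, two incident arcs/edges of the same type never form a special $2$-path (two arcs into, or two arcs out of, a vertex require distinct colours, and two same-coloured edges are forbidden). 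Consequently several vertices sharing a type cannot be mutually linked through $v$, so they must be linked pairwise through other vertices, and I would show that four such vertices force a $K_{2,3}$ (or $K_4$) minor, contradicting outerplanarity.

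The main obstacle is exactly this last step in full generality. When the four same-type vertices are all neighbours of $v$, a short argument works: their six mutual connections must avoid $v$, and tracing two connecting paths and contracting them exhibits two vertices with three common neighbours, i.e.\ a $K_{2,3}$ minor. The delicate cases are when some of the four vertices are reached from $v$ only through middle vertices, so that they lie at distance two from $v$, and when $G$ is not $2$-connected so that connections may traverse cut vertices; here I would pass to the outerplanar subgraph induced by $R$ together with all chosen middle vertices, use its block structure and the absence of $K_{2,3}$ and $K_4$ minors, and argue within each block. Once the sub-claim is established the two bounds meet, so $\omega_{a(m,n)}(\mathcal{O}) = \omega_{r(m,n)}(\mathcal{O}) = 3(2m+n)+1$. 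As a consistency check, the constant $3t+1 = 6m+3n+1$ is precisely the number of type-configurations of a $2$-path through a fixed middle vertex that fail to be special, the same quantity that drives the preceding random-graph theorem.
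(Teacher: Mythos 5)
Your lower bound is essentially the paper's: an apex vertex joined to $3(2m+n)$ vertices partitioned into blocks of three per adjacency type, with the two path edges inside each block forming a special $2$-path. (Minor slip: a pair $u_i,u_j$ with $|i-j|=2$ can straddle two blocks in your fan, but then the spokes have distinct types, so nothing breaks.) That half is fine.

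The upper bound, however, has a genuine gap, and it is not just the "delicate cases" you flag at the end: the sub-claim you build everything on is false. You fix an arbitrary $v\in R$ and assign to each non-neighbour $u$ the type of the \emph{first} edge $vx_u$ of some special $2$-path $v\,x_u\,u$, then claim each type is used at most three times. Take the extremal graph itself and choose $v=v_{1,1}$ (a non-apex vertex). Every vertex $v_{i,j}$ with $i\neq 1$ has the apex $x$ as its only possible middle vertex, and the $2$-path $v_{1,1}\,x\,v_{i,j}$ is special because the two spokes at $x$ have \emph{different} types; but the type you record for all of these vertices is the type of the single edge $v_{1,1}x$. So one type absorbs $3(2m+n)-2$ vertices, and the bound $|R|-1\le 3(2m+n)$ does not follow. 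The underlying problem is that the specialness of $v\,x_u\,u$ constrains the \emph{pair} of types at $x_u$, not the type of $vx_u$, so the first-edge type carries no useful information when many vertices share a middle vertex. The paper avoids this by first establishing a domination statement: it takes $G$ vertex-minimal and edge-maximal outerplanar, finds a degree-$2$ vertex $u_1\in R$ whose two neighbours $a,b$ are adjacent, and shows that one of them, say $a$, must be adjacent to \emph{all} of $R\setminus\{a\}$ (otherwise two vertices of $R$ seen only by $a$ and only by $b$ respectively could not be linked). Only then does it classify vertices by their actual adjacency type to $a$; two vertices with the same type to $a$ cannot be linked through $a$, hence must be within distance $2$ along the outer cycle, giving at most three per type and $|R|\le 3(2m+n)+1$. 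If you want to rescue your argument, you need this domination step before the type count; the $K_{2,3}$-minor idea alone will not recover it.
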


\begin{proof}
First we will show that $\omega_{a(m,n)}(\mathcal{O}) \geq 3(2m+n)+1$ by explicitly constructing an outerplanar $(m,n)$-clique $H = (V, A \cup E)$ with $3(2m+n)+1$ vertices as follows:

\begin{itemize}
	\item[-] the set of vertices $V = \{x\} \cup \{v_{i,j} | 1 \leq i \leq 2m+n, 1 \leq j \leq 3\}$,
	\item[-] the set of edges $E = \{xv_{i,j} | 1 \leq i \leq n, 1 \leq j \leq 3\} \cup \{v_{i,1}v_{i,2}, v_{i,2}v_{i,3}  | 1 \leq i \leq 2m+n\}$,
	\item[-] the set of arcs $A = \{xv_{i,j}, v_{i+m,j}x | n+1 \leq i \leq n+m, 1 \leq j \leq 3\}$,
	\item[-] the edges $v_{i,1}v_{i,2}$ and $v_{i,2}v_{i,3}$ have different colors for all $i \in \{1, 2,  ..., n\}$,
	\item[-] the edges $xv_{i,j}$ recieves the $i$th edge color for all $i \in \{1, 2,  ..., n\}$ and $j \in \{1, 2, 3\}$,
	\item[-] the arcs $xv_{i+n,j}$ and $v_{i+n+m,j}x$  recieves the $i$th arc color for all $i \in \{1, 2,  ..., m\}$ and $j \in \{1, 2, 3\}$.
\end{itemize}

It is easy to check that the graph $H$ is indeed an outerplanar $(m,n)$-clique with $3(2m+n)$ vertices. Thus, 
$\omega_{r(m,n)}(\mathcal{O}) \geq \omega_{a(m,n)}(\mathcal{O}) \geq 3(2m+n) +1$

\medskip

Now to prove the upper bound let $G = (V, A \cup E)$ be a minimal (with respect to number of vertices) $(m,n)$-colored mixed graph with underlying  outerplanar graph having relative clique number 
$\omega_{r(m,n)}(\mathcal{O})$. 
Moreover, assume that  we cannot add any more edges/arcs 
keeping the graph $G$ outerplanar. We can assume this because adding more edges will not affect the relative clique number as it is already equal to 
$\omega_{r(m,n)}(\mathcal{O})$.
Let $R$ be a relative clique of cardinality $\omega_{r(m,n)}(\mathcal{O})$ of $G$. 
Let $S = V \setminus R$.

As it is not possible to increase the number of edges/arcs of $G$ keeping the graph outerplanar  $d(v) \geq 2$ for all $v \in V$. Then, as $G$ is outerplanar, there exists a vertex $u_1 \in V$ with 
$d(u_1) = 2$. 
Note that if $u_1 \in S$ then  we can delete $u_1$ and connect the neighbors of $u_1$ with an 
edge  (if they are not already adjacent) to obtain a graph with same
relative clique number contradicting the minimality  of $G$.
Thus, $u_1 \in R$. 
Fix an outerplanar embedding of $G$ with the outer (facial) cycle having vertices $u_1, u_2, ..., u_R $ of $R$ embedded 
in a clockwise manner on the cycle.
Also let  $a$ and $b$ are the two neighbors of $u_1$. Note that $a$ and $b$ are adjacent as it is not possible to increase the number of edges/arcs of $G$ keeping it outerplanar. 


Every vertex of $R \setminus \{u_1,a,b\}$ is connected to $u_1$ through  $a$ or $b$ by a special 2-path. Note that, $a$ and $b$ can have at most one common neighbor other than $u_1$. 
Let that common neighbor, if it exists, be $x$. 
So, all the vertices from $R \setminus \{u_1,a,b, x\}$ are adjacent to exactly one of $a,b$. 
Also from the first part of the proof we know that $\omega_{r(m,n)}(\mathcal{O}) \geq \omega_{a(m,n)}(\mathcal{O}) \geq 3(2m+n) +1$ thus, 
$|R \setminus \{u_1,a,b,x\}| \geq 3$ for all $(m,n) \neq (0,1)$. 
Suppose neither $a$ nor $b$ is adjacent to all the vertices  of $R \setminus \{u_1,a,b, x\}$. In that case, there are two vertices 
in $R \setminus \{u_1,a,b, x\}$ that are neither adjacent nor connected by a 2-path. Therefore, either $a$ or $b$ must be adjacent to all the vertices of $R \setminus \{a,b\}$.

Assume without loss of generality that $a$ is adjacent to all the vertices of $R \setminus \{a\}$. Let 
$u_i, u_j \in (R \setminus \{a\})$ be two vertices with $i \neq j$. 
If both $u_i$ and $u_j$ are adjacent to $a$ with same color of edges, or same color of arcs with the same direction then $|i - j| \leq 2$ 
modulo $|R|$ 
as otherwise they can be neither adjacent nor connected by a special 2-path in $G$. 
Hence the number of vertices from $R \setminus \{a \}$ adjacent to $a$ with the same color of edges or same color of arcs with the same direction is at most three.

From this we can conclude that  

\begin{align}\nonumber
|R| &\leq 2\sum_{k=1}^m 3 + \sum_{k=1}^n 3 + |\{a\}| \\ \nonumber
     &\leq 3(2m+n) + 1. \nonumber
\end{align}

This completes the proof.
\end{proof}

Furthermore, using the above result, we will prove lower and upper bounds  for the $(m,n)$-absolute clique number of the family 
$ \mathcal{P}$  of planar graphs. 

\begin{theorem}
Let $\mathcal{P}$ be the family of planar graphs. Then $3(2m+n)^2+(2m+n)+1 \leq \omega_{a(m,n)}(\mathcal{P}) \leq 9(2m+n)^2 + 2(2m+n) + 2$ for all
$(m,n) \neq (0,1)$.
\end{theorem}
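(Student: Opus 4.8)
The plan is to prove the two inequalities separately. Throughout I write $k=2m+n$ for the number of adjacency types (an outgoing arc in each of the $m$ arc colours, an incoming arc in each arc colour, and an edge in each of the $n$ edge colours), and I use the characterisation that two vertices are ``covered'' precisely when they are adjacent or joined by a special $2$-path (Proposition~\ref{prop clique1} and its corollary), together with the observation that a $2$-path $u$-$w$-$v$ is special exactly when $u$ and $v$ present \emph{different} types at $w$: the only non-special configurations (both edges of one colour, both arcs into $w$ of one colour, both arcs out of $w$ of one colour) all force the \emph{same} type, while every mixed or oppositely-directed configuration is special.

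\textbf{Lower bound.} I would exhibit a planar $(m,n)$-clique of the required order. Take $k$ vertex-disjoint copies $C_1,\dots,C_k$ of the outerplanar $(m,n)$-clique on $3k+1$ vertices produced in Theorem~\ref{n-edge-colored outer clique}, add one new vertex $x$, and for each $i$ join $x$ to every vertex of $C_i$ by an arc or edge of the $i$-th type. The disjoint union $C_1\cup\dots\cup C_k$ is outerplanar, and an apex over an outerplanar graph is planar, so the resulting graph $G$ is planar. Every non-adjacent pair is covered: two vertices of one $C_i$ are covered inside $C_i$, which is itself a clique, while a vertex of $C_i$ and a vertex of $C_j$ with $i\neq j$ are joined by the special $2$-path through $x$, since they reach $x$ by different types. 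Hence $G$ is a planar $(m,n)$-clique of order $1+k(3k+1)=3(2m+n)^2+(2m+n)+1$.

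\textbf{Upper bound.} Let $G$ be a planar $(m,n)$-clique with vertex set $R$. Since adding arcs or edges to a clique only removes non-adjacent pairs and so preserves the clique property, I may assume the underlying graph is edge-maximal planar. Fix a vertex $v$ and split $R\setminus\{v\}$ into $N(v)$ and the set $D$ of vertices at distance exactly two from $v$, each of which reaches $v$ by a special $2$-path through some neighbour. The engine of the count is again that an apex over an outerplanar graph is planar, so $G[N(v)]$ --- and, for each $w$, $G[N(w)]$ --- is outerplanar. Sorting $N(v)$ into the $k$ type-classes at $v$, any two vertices in different classes are already covered by $v$, so each class must be covered by paths avoiding $v$; feeding this into Theorem~\ref{n-edge-colored outer clique} bounds $|N(v)|$ by roughly $k(3k+1)$. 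The vertices of $D$ are charged to the neighbour through which they reach $v$ and controlled by repeating the type/outerplanarity analysis inside the (again outerplanar) neighbourhood of that hub, which limits how many vertices of $D$ each hub can support per type.

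\textbf{Main obstacle.} The delicate point is that the special $2$-path covering a same-type pair need not remain inside the outerplanar neighbourhood in which I am counting: it may route through a distance-two vertex. Thus a type-class is not literally a relative clique of $G[N(v)]$, and one must show that this re-routing --- equivalently, the planar incidence between $N(v)$ and $D$ --- costs at most a bounded factor. This is exactly where the constant degrades from the tight value: in the clean case of a universal vertex ($D=\emptyset$) the neighbourhood count already yields $|R|\le 1+k(3k+1)=3(2m+n)^2+(2m+n)+1$, matching the construction, so all of the slack in the stated bound $9(2m+n)^2+2(2m+n)+2$ is absorbed by the treatment of the distance-two vertices, and making that accounting efficient is the crux of the proof.
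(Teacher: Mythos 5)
Your lower bound is exactly the paper's construction (an apex vertex over $2m+n$ disjoint copies of the outerplanar clique of Theorem~\ref{n-edge-colored outer clique}, one adjacency type per copy) and is correct. The upper bound, however, has a genuine gap, and you have located it yourself without closing it. The paper's argument does not start from an arbitrary vertex $v$: it first observes that every $(m,n)$-clique has diameter at most~$2$, and then invokes the theorem of Goddard and Henning that every planar graph of diameter two, with a single nine-vertex exception, has a dominating set of size at most two. This is the ingredient your sketch is missing, and it is what makes the distance-two set disappear. If a single vertex $x$ dominates, then $D=\emptyset$, $G-x$ is outerplanar, and each type-class $N_i(x)$ is a relative clique of the \emph{whole} outerplanar graph $G-x$ (the evading special $2$-paths have nowhere else to live), so Theorem~\ref{n-edge-colored outer clique} applies directly and yields the tight bound $3(2m+n)^2+(2m+n)+1$. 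If two vertices $x,y$ dominate, the paper bounds $|N(x)\cap N(y)|\le 3(2m+n)^2$ by a planarity argument on vertices with identical incidence to both $x$ and $y$, and controls the private neighbours by cutting the plane along a $4$-cycle through $x$, $y$ and two common neighbours into two regions, each of which becomes outerplanar after deleting $x$ and $y$; summing gives $9(2m+n)^2+2(2m+n)+2$.

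Without the domination step, your accounting does not close. For an arbitrary $v$, a type-class of $N(v)$ is a relative clique only of $G-v$, which is merely planar, so Theorem~\ref{n-edge-colored outer clique} does not apply to it; and charging each vertex of $D$ to a hub $w\in N(v)$ gives, per hub and per type, up to $3(2m+n)+1$ vertices, with potentially $\Theta\bigl((2m+n)^2\bigr)$ hubs --- a bound of order $(2m+n)^4$ rather than $(2m+n)^2$. Your closing remark that ``making that accounting efficient is the crux of the proof'' is accurate: that crux is precisely what remains to be supplied, and the known route to it is the diameter-two domination theorem rather than a refined charging scheme.
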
 

\begin{proof}
First we will show that $\omega_{a(m,n)}(\mathcal{P}) \geq 3(2m+n)^2+(2m+n)+1$ by explicitly constructing a planar $(m,n)$-clique $H^* = (V^*, A^* \cup E^*)$ with $3(2m+n)^2+(2m+n)+1$ vertices. 
First recall the example of the 
outerplanar $(m,n)$-clique $H$ from the previous proof (proof of Theorem~\ref{n-edge-colored outer clique}).
Take $2m+n$ copies of $H$ and call them $H_1, H_2, ..., H_{2m+n}$. 
Let the vertices of $H_i$ be $V_i = \{v_{i,1}, v_{i,2}, ..., v_{i,3(2m+n)+1} \}$ for all $i \in \{1,2, ..., 2m+n\}$. 
The graph $H^*$ is constructed as follows:

\begin{itemize}
	\item[-] the set of vertices $V^* = \{x\} \cup (\cup_{i=1}^{2m+n} V_i)$,
	\item[-] the edges $xv_{i,j}$ has the $i$th edge color for $i \in \{1,2,...,n\}$ and $j \in \{1,2,...,3(2m+n)+1\}$,
	\item[-] the arcs $xv_{i+n,j}$ and $v_{i+n+m,j}x$ has the $ith$ arc color for $i \in \{1,2,...,m\}$ and $j \in \{1,2,...,3(2m+n)+1\}$. 
\end{itemize}

It is easy to check that the graph $H^*$ is indeed a planar $(m,n)$-clique  with $3(2m+n)^2+(2m+n)+1$ vertices.

\medskip

Now to prove the upper bound first notice that any $(m,n)$-clique has diameter at most 2. 
Let $G = (V, A \cup E)$ be 
a planar $(m,n)$-clique with more than $\omega_{a(m,n)}(G) > 3(2m+n)^2+(2m+n)+1$. Assume that $G$ is triangulated. 
As deleting edges do not increase the $(m,n)$-absolute clique number, it is enough to prove this result for triangulated $G$. 

Suppose that the set of vertices adjacent to a vertex $u$ by an edge with $i$th edge color is $N_i(u)$ for all $i \in \{1,2,...,n\}$. 
Also, let the set of vertices $v$ that are adjacent to $u$ with an arc $uv$ with $j$th arc color is $N_j^+(u)$ and the set of vertices $v$ that are adjacent to $u$ with an arc $vu$ with $j$th arc color is $N_j^-(u)$ for $j \in \{1,2,...,m\}$.

We know that a diameter two planar graph is dominated by at most two vertices except for a particular graph on nine vertices due to Goddard 
and Henning~\cite{dom}. We have already shown in the first part of the proof that 
$\omega_{a(m,n)}(\mathcal{P}) \geq 3(2m+n)^2+(2m+n)+1 \geq 15$ for   all $(m,n) \neq (0,1)$. Thus,  we can assume that $G$ is dominated by at most two vertices. 

First assume that $G$ is dominated by a single vertex $x$. 
Let $G'$ be the graph obtained by deleting the vertex $x$ from $G$. 
Note that $G'$ is an outerplanar graph. Furthermore, the graph induced by $N_i(x)$  is a relative $(m,n)$-clique of $G'$. 
Thus, by Theorem~\ref{n-edge-colored outer clique} $|N_i(x)| \leq 3(2m+n)+1$ for all $i \in \{1,2,...,n\}$. 
Similarly, $|N_j^+(x)|, |N_j^-(x)| \leq 3(2m+n)+1$ for all $j \in \{1,2,...,m\}$. 
Thus,

\begin{align}\nonumber
\omega_{an}(G) & \leq |\cup_{i=1}^n N_i(x)| + |\cup_{j=1}^m N_j^+(x)| + |\cup_{j=1}^m N_j^-(x)| +|\{x\}| \\ \nonumber
    & \leq 3(2m+n)^2+(2m+n)+1. \nonumber
\end{align}

Now let $G$ has domination number 2. 
Let $x,y$ be such that they dominates $G$ and has maximum number of common neighbors among all pairs of dominating vertices of $G$. 
Now we fix some notations to prove the rest of this result.

\begin{itemize}
	\item[-] $C = N(x) \cap N(y)$ and $C_{ij} = N_i(x) \cap N_j(y)$ for all $i,j \in \{1,2,...,n\}$,
	\item[-] $C_{ij}^{* \alpha} = N_i(x) \cap N_j^{\alpha}(y)$ and $C_{ji}^{ \alpha *} = N_j^{\alpha}(x) \cap N_i(y)$ for all 
	$i \in \{1,2,...,n\}$, $j \in \{1,2,...,m\}$ and $\alpha \in \{+,-\}$, 
	\item[-] $C_{ij}^{\alpha \beta} = N_i^{\alpha}(x) \cap N_j^{\beta}(y)$  for all 	$i, j \in \{1,2,...,m\}$ and $\alpha, \beta \in \{+,-\}$,
	\item[-] $S_{x_i} = N_i(x) \setminus C$, $S_{y_i} = N_i(y) \setminus C$, $S_{x_j}^{\alpha} = N_j^{\alpha}(x) \setminus C$ and $S_{y_j}^{\alpha} = N_j^{\alpha}(y) \setminus C$ for all $i \in \{1,2,...,n\}$, $j \in \{1,2,...,m\}$ and $\alpha \in \{+,-\}$,
	\item[-] $S_x = N(x) \setminus C$, $S_y = N(y) \setminus C$ and $S = S_x \cup S_y$. 
\end{itemize}

Note that, for $|C| \geq 6$ we must have $|C_{ij}|, |C_{ik}^{* \beta}|, |C_{lj}^{ \alpha*}|, |C_{lk}^{\alpha \beta}| \leq 3$ for all $i,j \in \{1,2,...,n\}$, 
$l,k \in \{1,2,...,m\}$ and $\alpha, \beta \in \{+,-\}$ as otherwise it is not possible to have 
pairwise distance at most two between  the  vertices of $C$ which has the same incidence rule with both $x$ and $y$
 keeping the graph planar. So, we can conclude that

\begin{align}\nonumber
|C| \leq  \left| \bigcup_{1 \leq i,j \leq n} C_{ij} \right| + \left|\bigcup_{\substack{1 \leq i \leq n,\\
                                           1 \leq k \leq m,\\
                                           \alpha \in \{+,-\} }} C_{ik}^{* \beta}\right| + 
                                           \left|\bigcup_{\substack{1 \leq l \leq m,\\
                                          1 \leq j \leq n,\\
                                          \alpha \in \{+,-\} }} C_{lj}^{ \alpha *}\right| + 
                                          \left|\bigcup_{\substack{1 \leq l,k \leq m,\\
                                          \alpha, \beta \in \{+,-\} }} C_{lk}^{\alpha \beta}\right|     \leq 3(2m+n)^2. \nonumber
\end{align}

As $3(2m+n)^2 \geq 6$ for all $(m,n) \neq (0,1)$ we can say that $|C| \leq 3(2m+n)^2$ in general. 

\medskip

If $|C| \geq 2$ then pick two vertices $u,v \in C$. Note that the cycle induced by $x,y,u,v$ divides the plane into two regions: denote the interior by $R_1$ and the exterior by $R_2$. Consider a  planar embedding of $G$. Observe that if we delete the vertices $x,y$ and all the vertices placed in $R_2$, then the resultant graph, denoted by $G_1$, is outerplanar.  Similarly, if we delete the vertices $x,y$ and all the vertices placed in $R_1$, then the resultant graph, denoted by $G_2$, is outerplanar.

Observe that if we restrict ourselves to $G_1$,  then the set $[S_{x_i} \cup S_{y_i}]_{G_1}$ of vertices of $(S_{x_i} \cup S_{y_i})$ which are also part of $G_1$  is a relative clique. Thus,
 by Theorem~\ref{n-edge-colored outer clique} $[S_{x_i} \cup S_{y_i}]_{G_1} \leq 3(2m+n) + 1$ for all $i \in \{1,2,...,n\}$. 
 Similarly, we can show that $[S_{x_j}^{\alpha} \cup S_{y_j}^{\alpha}]_{G_1} \leq 3(2m+n) +1$ for all $j \in \{1,2,...,m\}$ and $\alpha \in \{+,-\}$. Thus, 
 in $G_1$ we have $| [S]_{G_1} |\leq 3(2m+n)^2 + (2m+n)$.

 Similarly, in $G_2$ we have $| [S]_{G_2} | \leq 3(2m+n)^2 + (2m+n)$. Thus, in $G$ we have $|S| \leq 6(2m+n)^2 + 2(2m+n)$. 
 
 \medskip
 
 For $|C| = 1$, the graph obtained by deleting the vertices $x$ and $y$ is outerplanar. So, repeating the same argument as above we get 
 $|S| \leq 3(2m+n)^2 + (2m+n)$ in this case. Hence, no matter what, we have $|S| \leq 6(2m+n)^2 + 2(2m+n)$.

Therefore,
\begin{align}\nonumber
|G| = |C| + |S| + |\{x,y\}|   \leq 9(2m+n)^2 + 2(2m+n) +2. \nonumber
\end{align}

Hence we are done.
\end{proof}

Finally, we would like to make the following conjecture regarding the $(m,n)$-absolute clique number of planar graphs.

\begin{conjecture}
For the family $\mathcal{P}$ of planar graphs   $\omega_{a(m,n)}(\mathcal{P}) = 3(2m+n)^2+(2m+n)+1$ for all $(m,n) \neq (0,1)$.
\end{conjecture}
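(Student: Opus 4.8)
The plan is to retain the construction from the first half of the previous theorem, which already gives $\omega_{a(m,n)}(\mathcal{P})\ge 3(2m+n)^2+(2m+n)+1$, and to prove the matching upper bound, thereby pinning the value. Write $q=2m+n$. As before, I would reduce to a triangulated planar $(m,n)$-clique $G$ (adding edges or arcs cannot decrease the absolute clique number), so that $G$ has diameter at most two and is therefore dominated by at most two vertices, the nine-vertex exception of Goddard and Henning being excluded since $3q^2+q+1\ge 15$ for all $(m,n)\ne(0,1)$. The single-domination case is already enough: if one vertex $x$ dominates, then $G-x$ is outerplanar, each class $N_i(x)$, $N_j^{+}(x)$, $N_j^{-}(x)$ is a relative clique of $G-x$, and Theorem~\ref{n-edge-colored outer clique} yields $|N(x)|\le q(3q+1)$, so $|G|\le 3q^2+q+1$, exactly the conjectured value. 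Thus the whole conjecture reduces to showing that a two-dominated triangulated clique also has order at most $3q^2+q+1$, and this is precisely where the proof of the previous theorem loses a factor of three.

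For two dominators $x,y$, the first ingredient I would develop is that the neighbourhood graph $G[N(x)]$ is outerplanar: in the triangulation the link of $x$ is a cycle $L$, every chord of $L$ is forced to the side of $L$ away from $x$, and drawing $L$ as the outer boundary gives an outerplanar embedding. If one can upgrade this into the clean estimate $|N(x)|\le 3q^2+q$ (and symmetrically for $y$), then, since $|N(x)|=|C|+|S_x|$, $|N(y)|=|C|+|S_y|$ and $|G|=|C|+|S_x|+|S_y|+2$, summing the two estimates gives $|G|\le 6q^2+2q-|C|+2$. This already settles the conjecture whenever the common neighbourhood is large, namely $|C|\ge 3q^2+q+1$, and, more usefully, it isolates the real enemy as the regime where $C$ is small and the private neighbourhoods $S_x,S_y$ are correspondingly large. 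The hoped-for mechanism is a genuine trade-off between $|C|$ and $|S|$ that forces the single-dominated extremal example to be optimal.

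The main obstacle is twofold. First, the classes $N_i(x)$ need not be relative cliques \emph{inside} the outerplanar graph $G[N(x)]$: two vertices of $N_i(x)$ may be reconciled only by a special $2$-path whose central vertex lies in $S_y$ or elsewhere outside $N(x)$, so Theorem~\ref{n-edge-colored outer clique} cannot be applied verbatim to $G[N(x)]$, and the clean bound $|N(x)|\le 3q^2+q$ must instead be wrung out of the global planar structure rather than from the neighbourhood graph alone. Second, and decisively, the small-$|C|$ regime demands a genuinely planar nesting analysis: I would order the common neighbours in the cyclic order in which they appear around the $x$--$y$ axis, observe that the private neighbours fall into the pockets bounded by consecutive quadrilaterals $x,c_k,y,c_{k+1}$, and argue that a private vertex of one pocket can reach a private vertex of another pocket within distance two only through $C$, so that a small $C$ cannot support large $S_x$ and $S_y$ simultaneously. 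Making this trade-off quantitatively tight — combining the distance-two constraint among private neighbours sharing an adjacency type to $x$ or $y$ with the planar nesting so as to collapse the $9q^2$ of the current proof down to $3q^2$ — is exactly the step I expect to be hard, and is presumably why the statement is posed as a conjecture rather than proved.
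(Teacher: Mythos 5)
The statement you are trying to prove is stated in the paper as a \emph{conjecture}: the authors do not prove it, they only establish the lower bound $3(2m+n)^2+(2m+n)+1$ and the weaker upper bound $9(2m+n)^2+2(2m+n)+2$ (their planar theorem), and they note the conjecture is known only for $(m,n)=(1,0)$ and $(0,2)$ by external references. Your proposal does not close this gap either, and you say so yourself in the last sentence. Concretely: your lower bound construction and your single-dominator analysis (writing $q=2m+n$, reducing to a triangulated diameter-two planar graph, invoking Goddard--Henning, and bounding $|N(x)|\le q(3q+1)$ via Theorem~\ref{n-edge-colored outer clique}) reproduce exactly what the paper already does in its planar theorem, and they do yield the sharp value $3q^2+q+1$ in the one-dominator case. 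But the two-dominator case is where the paper loses the factor of three, and your treatment of it is a programme, not a proof: the claim $|N(x)|\le 3q^2+q$ for a dominator $x$ is not established (you correctly observe that $N_i(x)$ need not be a relative clique of the outerplanar graph $G[N(x)]$, since the special $2$-paths certifying relative-clique membership may route through vertices outside $N(x)$, so Theorem~\ref{n-edge-colored outer clique} does not apply there), and the trade-off between $|C|$ and $|S|$ in the small-$|C|$ regime is only described qualitatively. Even if the estimate $|N(x)|\le 3q^2+q$ were granted, your inequality $|G|\le 6q^2+2q-|C|+2$ only settles the case $|C|\ge 3q^2+q+1$, leaving the entire small-$|C|$ regime open.

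To be clear about what would be needed: the missing ingredient is a quantitatively tight planar nesting argument showing that when $x$ and $y$ both dominate, the private neighbourhoods $S_x$ and $S_y$ together with the common neighbourhood $C$ cannot exceed $3q^2+q-1$ vertices in total. The paper's own argument bounds $|C|\le 3q^2$ and $|S|\le 6q^2+2q$ \emph{independently} (by cutting along a $4$-cycle $xuyv$ into two outerplanar pieces), and these two bounds are individually close to tight, so any improvement must exploit that they cannot both be attained simultaneously --- exactly the trade-off you gesture at but do not prove. Your proposal is therefore best read as a correct account of why the conjecture is plausible and where the difficulty lies, not as a proof of it.
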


It is known that the conjecture is true for $(m,n) = (1,0)$~\cite{unique_oclique} and $(0,2)$~\cite{myphd}.

\section{Complexity aspects}\label{complexity}
We know that the complexity of deciding whether, given an undirected simple graph $G$, we can assign colors to the arcs and edges of $G$ to make it an $(m,n)$-clique is \NPclass-hard for $(m,n) = (1,0)$ and $(0,2)$~\cite{oclique-complexity}. Here we address a new related problem concerning signed graphs, an equivalence class of $(0,2)$-colored mixed graphs~\cite{signedhom}. 
We will not discuss signed graphs  here but will 
 encourage the reader to have a look at~\cite{signedhom} instead for finding motivation of the problem. 
 We will formulate the problem in 
terms of the definitions and notions given in this article to keep this paper self-contained.

In a $(0,2)$-colored mixed graph $G$, by an \textit{unbalanced 4-cycle} we refer to a $4$-cycle of $G$ having an odd number of edges of the same color. 
We call $G$ a \textit{signed clique} if each pair of vertices of $G$ is part of an unbalanced 4-cycle. 
An undirected simple graph $G$ is being \textit{$2$-edge-colored} if we assign a color to each of its edges from a fixed set of two colors.  

Our main result of this section reads as follows: It is \NPclass-complete to decide whether a given undirected graph can be $2$-edge-colored so that we obtain a signed clique. In other words, there should not be an easy characterization of signed cliques in terms of their underlying undirected graphs.

\bigskip

\noindent \SSCLIQUEPB \\
\noindent Input: An undirected graph $G$. \\
\noindent Question: Can $G$ be $2$-edge-colored so that we get a signed clique?

\medskip

\begin{theorem}
\SSCLIQUEPB~is \NPclass-complete.
\end{theorem}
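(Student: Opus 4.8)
The plan is to establish membership in \NPclass\ and then \NPclass-hardness by a reduction from \MONNAESATPB, which is \NPclass-complete. Membership is routine: a $2$-edge-coloring of $G$ is a certificate, and given one we can check in polynomial time whether every pair of vertices lies on an unbalanced $4$-cycle. For a fixed pair $u,v$ it suffices to examine the $O(|V(G)|^2)$ candidate $4$-cycles through $u,v$ (choosing the remaining one or two vertices among the common neighbours of $u,v$ or the neighbours of their neighbours) and to test the parity of each; since a $4$-cycle is unbalanced precisely when the number of edges of one fixed colour on it is odd, each individual test is immediate. Hence the verification is polynomial.

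For hardness, the starting observation is that, encoding the two colours as $0$ and $1$, a $4$-cycle is unbalanced if and only if the sum of its four edge-colours is odd; in particular a $4$-cycle that uses two \emph{variable} edges carrying values $c,c'$ together with two edges pinned to colour $0$ is unbalanced exactly when $c \neq c'$. I would exploit this by building, from a \MONNAESATPB\ instance $\phi$ with variables $x_1,\dots,x_N$ and clauses $C_1,\dots,C_M$, a graph $G_\phi$ in which each variable $x_i$ is represented by a dedicated gadget whose balance records the value $c_i$, and in which each clause $C=(x_i,x_j,x_k)$ owns a non-adjacent pair $p_C,q_C$ whose only $4$-cycles are three \emph{comparison} cycles of balances $c_i+c_j$, $c_j+c_k$ and $c_i+c_k$ modulo $2$. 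The pair $p_C,q_C$ then lies on an unbalanced $4$-cycle if and only if at least one of these pairwise sums is odd, i.e.\ if and only if $c_i,c_j,c_k$ are \emph{not all equal} --- which is exactly the not-all-equal clause constraint. A consistency gadget is attached so that all occurrences of a given variable across different clauses are forced to a common value.

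The correctness argument then splits in the usual two directions. A not-all-equal satisfying assignment of $\phi$ yields a coloring under which every clause pair $p_C,q_C$, and every other pair, sits on an unbalanced $4$-cycle, so $G_\phi$ becomes a signed clique; conversely, any coloring making $G_\phi$ a signed clique must in particular place each $p_C,q_C$ on an unbalanced $4$-cycle, forcing the corresponding clause to be satisfied, while the consistency gadget guarantees that the induced truth assignment is well defined. To ensure that the clause constraints are the \emph{only} binding ones, I would embed all auxiliary vertices into a dense ``template'' portion of $G_\phi$ for which an unbalanced $4$-cycle through any pair is present under \emph{every} admissible coloring.

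The main obstacle is precisely this global bookkeeping rather than the local clause gadget. Because being a signed clique forces every pair of vertices to lie on a common $4$-cycle, $G_\phi$ must have diameter at most $2$ and be rich in $4$-cycles; the gadgets must therefore be designed so that (i) no pair of vertices ends up at distance greater than $2$, (ii) each clause pair $p_C,q_C$ acquires no spurious $4$-cycle beyond the three intended comparison cycles, and (iii) all incidental pairs lie on an unbalanced $4$-cycle irrespective of the chosen assignment, so that they impose no hidden constraints. A further subtlety is that the signed-clique property is invariant under switching (resigning at a vertex), so the genuinely meaningful data are cycle balances rather than individual edge colours; the variable and comparison gadgets must accordingly be set up so that the relevant balances are tied together homologically, and I expect verifying this switching-robust consistency to be the most delicate step of the construction.
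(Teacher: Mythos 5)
Your overall strategy is exactly the paper's: membership in \NPclass{} is routine, and hardness comes from \MONNAESATPB{} via the observation that a $2$-colored $4$-cycle is unbalanced precisely when the mod-$2$ sum of its edge colors is odd, so that designated non-adjacent pairs can be made to encode the not-all-equal constraint while all remaining pairs are discharged by a dense auxiliary structure. Your clause gadget (a single pair $p_C,q_C$ with exactly three common neighbours, whose three $4$-cycles have balances $c_i+c_j$, $c_j+c_k$, $c_i+c_k$) is if anything a slightly cleaner encoding of NAE than the paper's, which instead uses a clause vertex $w_j$ and three pairwise ``representative'' pairs among the occurrence vertices $v_{i_1,j},v_{i_2,j},v_{i_3,j}$.

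The gap is that the two components you explicitly defer are where all the technical content lives, and as written they are not constructed. First, the consistency gadget: you need a mechanism forcing every occurrence of $x_i$ to carry the same balance, with no spurious $4$-cycles through the meaningful pairs. The paper does this with two reference vertices $r_1,r_2$ joined to all variable and occurrence vertices; the pair $\{u_i,u_i'\}$ lies on the unique $4$-cycle $u_ir_1u_i'r_2u_i$, which forces $r_1,r_2$ to agree on $u_i$ and disagree on $u_i'$ (up to relabelling), and then each pair $\{u_i',v_{i,j}\}$ forces agreement on every $v_{i,j}$. Second, the ``dense template'': simply making the auxiliary part rich in $4$-cycles is not enough, because you must simultaneously guarantee that (a) no new $4$-cycle passes through any meaningful pair, and (b) every incidental pair lies on an unbalanced $4$-cycle under a coloring fixed \emph{in advance}, independent of the assignment. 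The paper achieves both by attaching two private vertices $a_u,b_u$ to each old vertex $u$, a vertex $c_{u,v}$ for each non-representative pair, turning all new vertices into a clique, and checking that any new cycle through a representative pair must enter and cross the clique and hence has length at least $6$; a fixed partial coloring then handles every incidental pair. Without these constructions the correctness equivalence cannot be verified. (Your worry about switching invariance, on the other hand, is not a real obstacle: the unbalanced-$4$-cycle property is switching-invariant, which only makes the without-loss-of-generality normalizations easier.)
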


\begin{proof}
We prove the \NPclass-hardness of \SSCLIQUEPB~by reduction from the following \NPclass-complete problem.

\medskip

\noindent \MONNAESATPB \\
\noindent Instance: A $3$CNF formula $F$ over variables $x_1, x_2, ..., x_n$ and clauses $C_1, C_2, ..., C_m$ involving no negated variables. \\
\noindent Question: Is $F$ \textit{not-all-equal satisfiable}, that is, does there exist a truth assignment to the variables under which every clause has at least one true and one false variable?

\medskip

 Due to the \NPclass-completeness of the {\scshape 2-Colouring of $3$-Uniform Hypergraph} problem (see~\cite{Lov73}), it is easily seen that \MONNAESATPB~remains \NPclass-complete when every clause of $F$ has its three variables being different. So this additional restriction is understood throughout. From a $3$CNF formula $F$, we construct an undirected graph $G_F$ such that

\begin{center}
$F$ is not-all-equal satisfiable \\
$\Leftrightarrow$ \\
$G_F$ can be $2$-edge-colored in a signed clique way.
\end{center}

The construction of $G_F$ is achieved in two steps. We first construct, from $F$, an undirected graph $H_F$ such that $F$ is not-all-equal satisfiable if and only if there exists a $2$-edge-coloring $c_H$ of $H$ under which only some \textit{representative pairs} of non-adjacent vertices belong to unbalanced $4$-cycles. This equivalence is obtained by designing $H_F$ in such a way that every representative pair belongs to a unique $4$-cycle, with 
some of these $4$-cycles overlapping to force some edges to have the same or different colors by $c_H$. Then we obtain $G_F$ by adding some vertices and edges to $H_F$ in such a way that no new $4$-cycles including representative pairs are created, and there exists a partial $2$-edge-coloring of the edges in $E(G_F) \setminus E(H_F)$ for which every non-representative pair is included in an unbalanced $4$-cycle. In this way, the equivalence between $G_F$ and $F$ is only dependent of the equivalence between $H_F$ and $F$, which has not been altered when constructing $G_F$ from $H_F$.

\bigskip

 \textbf{Step~1.} Start by adding two vertices $r_1$ and $r_2$ to $H_F$. Then, for every variable $x_i$ of $F$, add two vertices $u_i$ and $u'_i$ to $H_F$, and link these vertices to both $r_1$ and $r_2$. Now, for every $i \in \{1, 2, ..., n\}$, assuming the variable $x_i$ belongs to the (distinct) clauses $C_{j_1}, C_{j_2}, ..., C_{j_{n_i}}$, add $n_i$ new vertices $v_{i,j_1}, v_{i,j_2}, ..., v_{i, j_{n_i}}$ to $H_F$, and join all these new vertices to both $r_1$ and $r_2$. Finally, for every clause $C_j = (x_{i_1} \vee x_{i_2} \vee x_{i_3})$ of $F$, add a new vertex $w_j$ to $H_F$, and join it to all of $v_{i_1,j}, v_{i_2,j}, v_{i_3,j}$. 

The representative pairs are the following. For every variable $x_i$ of $F$, all pairs $\{u_i, u'_i\}$ and those of the form $\{u'_i, v_{i,j}\}$ are representative. Also, for every clause $C_j = (x_{i_1} \vee x_{i_2} \vee x_{i_3})$ of $F$, the pairs $\{v_{i_1,j}, v_{i_2,j}\}$, $\{v_{i_1,j}, v_{i_3,j}\}$ and $\{v_{i_2,j}, v_{i_3,j}\}$ are representative.

\medskip

We below prove some claims about the existence of a \textit{good $2$-edge-coloring} $c_H$ of $H_F$, \textit{i.e.} a $2$-edge-coloring under which every representative pair of $H_F$ belongs to an unbalanced $4$-cycle. By $(H_F, c_H)$, we refer to the $2$-edge-colored graph obtained from $H_F$ by coloring its edges as indicated by $c_H$. Given two edges $u_1u$ and $u_2u$ of $H_F$ meeting at $u$, we say below that $u_1$ and $u_2$ \textit{agree} (resp. \textit{disagree}) on $u$ (by $c_H$) if $u_1u$ and $u_2u$ are assigned the same color (resp. distinct colors) by $c_H$.

\begin{claim} \label{claim1}
Let $c_H$ be a good $2$-edge-coloring of $H_F$, and let $x_i$ be a variable appearing in clauses $C_{j_1}, C_{j_2}, ..., C_{j_{n_i}}$ of $F$. If $r_1$ and $r_2$ agree (resp. disagree) on $u_i$, then $r_1$ and $r_2$ agree (resp. disagree) on $v_{i, j_1}, v_{i, j_2}, ..., v_{i, j_{n_i}}$.
\end{claim}

\begin{proof}
Because $\{u_i, u'_i\}$ is representative and $u_ir_1u'_ir_2u_i$ is the only $4$-cycle containing $u_i$ and $u'_i$, the vertices $r_1$ and $r_2$ agree on $u_i$ and disagree on $u'_i$ in $(H_F, c_H)$ without loss of generality. Now, because every pair $\{u'_i, v_{i, j}\}$ is representative, the only $4$-cycle including $u'_i$ and $v_{i ,j}$ is $u'_ir_1v_{i,j}r_2u'_i$, and $r_1$ and $r_2$ disagree on $u'_i$ in $(H_F, c_H)$, necessarily $r_1$ and $r_2$ agree on $v_{i,j}$.
\end{proof}

\begin{claim} \label{claim2}
Let $c_H$ be a good $2$-edge-coloring of $H_F$, and let $C_j = (x_{i_1} \vee x_{i_2} \vee x_{i_3})$ be a clause of $F$. Then $r_1$ and $r_2$ cannot agree or disagree on all of $v_{i_1,j}, v_{i_2,j}, v_{i_3,j}$.
\end{claim}

\begin{proof}
First note that the only $4$-cycles of $H_F$ containing, say, $v_{i_1,j}$ and $v_{i_2,j}$ are $v_{i_1,j}r_1v_{i_2,j}r_2v_{i_1,j}$, $v_{i_1,j}w_jv_{i_2,j}r_1v_{i_1,j}$ and $v_{i_1,j}w_jv_{i_2,j}r_2v_{i_1,j}$. The claim then follows from the fact that if $r_1$ and $r_2$, say, agree on all of $v_{i_1,j}, v_{i_2,j}, v_{i_3,j}$, then $(H_F, c_H)$ has no unbalanced $4$-cycle including $r_1$ and $r_2$ and two of $v_{i_1,j}, v_{i_2,j}, v_{i_3,j}$. So, since $c_H$ is a good $2$-edge-coloring, necessarily there are at least three unbalanced $4$-cycles containing $w_j$ and every two of $v_{i_1,j}, v_{i_2,j}, v_{i_3,j}$, but one can easily convince himself that this is impossible.

Assume on the contrary that \textit{e.g.} $r_1$ and $r_2$ agree on $v_{i_1,j}$ and disagree on $v_{i_2,j}$ and $v_{i_3,j}$. So far, note that $r_1v_{i_1,j}r_2v_{i_2,j}r_1$ and $r_1v_{i_1,j}r_2v_{i_3,j}r_1$ are unbalanced $4$-cycles of $(H_F, c_H)$. Then there is no contradiction against the fact that $c_H$ is good, since \textit{e.g.} $v_{i_2,j}$ and $v_{i_3,j}$ can agree on $w_j$ (and, in such a situation, $v_{i_2,j}w_jv_{i_3,j}r_1v_{i_2,j}$ is an unbalanced $4$-cycle). The important thing to have in mind is that coloring the edges incident to $w_j$ can only create unbalanced $4$-cycles containing the representative pairs $\{v_{i_1,j}, v_{i_2,j}\}$, $\{v_{i_1,j}, v_{i_3,j}\}$ and $\{v_{i_2,j}, v_{i_3,j}\}$. So coloring the edges incident to $w_j$ to make $v_{i_2,j}$ and $v_{i_3,j}$ belong to some unbalanced $4$-cycle does not compromise the existence of other unbalanced $4$-cycles including farther vertices from another representative pair.
\end{proof}

We claim that we have the desired equivalence between not-all-equal satisfying $F$ and finding a good $2$-edge-coloring $c_H$ of $H_F$. To see this holds, just assume, for every variable $x_i$ of $F$, that having $r_1$ and $r_2$ agreeing (resp. disagreeing) on $u_i$ simulates the fact that variable $x_i$ of $F$ is set to \textit{true} (resp. \textit{false}) by some truth assignment, and that having $r_1$ and $r_2$ agreeing (resp. disagreeing) on some vertex $v_{i, j}$ simulates the fact that variable $x_i$ provides value \textit{true} (resp. \textit{false}) to the clause $C_j$ of $F$. Then the property described in Claim~\ref{claim1} depicts the fact that if $x_i$ is set to some truth value by a truth assignment, then $x_i$ provides the same truth value to every clause containing it. The property described in Claim~\ref{claim2} depicts the fact that every clause $C_j$ is considered satisfied by some truth assignment if and only if $C_j$ is supplied different truth values by its variables. So we can deduce a good $2$-edge-coloring of $H_F$ from a truth assignment not-all-equal satisfying $F$, and vice-versa.

\medskip

\textbf{Step~2.} As described above, we now construct $G_F$ from $H_F$ in such a way that 

\begin{itemize}
	\item every $4$-cycle of $G_F$ including the vertices of a representative pair is also a $4$-cycle in $H_F$,
	
	\item the edges of $E(G_F) \setminus E(H_F)$ can be colored with two colors so that every two vertices which do not form a representative pair belong to an unbalanced $4$-cycle.
\end{itemize}

\noindent In this way, $G_F$ will be the support of a signed clique if and only if $H_F$ admits a good $2$-edge-coloring, which is true if and only if $F$ can be not-all-equal satisfied. The result will then hold by transitivity.

\medskip

For every vertex $u$ of $H_F$, add the edges $ua_u$ and $ub_u$, where $a_u$ and $b_u$ are two new vertices. Now, for every two distinct vertices $u$ and $v$ of $H_F$, if $\{u,v\}$ is not a representative pair, then add another vertex $c_{u,v}$ to the graph, as well as the edges $uc_{u,v}$ and $vc_{u,v}$. Finally turn the subgraph induced by all newly added vertices into a clique. The resulting graph is $G_F$. As claimed above, note that the only $4$-cycles of $G_F$ containing two vertices $u$ and $v$ forming a representative pair are those of $H_F$. Every other such new cycle has indeed length at least~$6$. Namely, every such new cycle starts from $u$, then has to enter the clique by either $a_u$ or $b_u$, cross the clique to either $a_v$ or $b_v$, reach $v$, before finally going back to $u$.

\medskip

Consider the following coloring with two colors of the edges in $E(G_F) \setminus E(H_F)$. For every vertex $u \in V(H_F)$, let $ub_u$ be colored~$1$. Similarly, for every two distinct vertices $u$ and $v$ of $H_F$ such that $\{u ,v\}$ is not representative (\textit{i.e.} $c_{u,v}$ exists), let $c_{u,v}v$ be colored~$1$. Let finally all other edges be colored~$2$. Clearly, under this partial $2$-edge-coloring of $G_F$, every two vertices $u$ and $v$ of $G_F$ not forming a representative pair are either adjacent or belong to some unbalanced $4$-cycle:

\begin{itemize}
	\item if $u$ and $v$ do not belong to $H_F$, then they belong to the clique and are hence adjacent;
	\item if $u$ belongs to $H_F$ but $v$ does not, then observe that either $u$ and $v$ are adjacent (in this situation $v$ is either $a_u$, $b_u$ or $c_{u,i}$ for some $i$), or $ua_uvb_uu$ is an unbalanced $4$-cycle;
	\item if $u$ and $v$ are vertices of $H_F$ and $\{u,v\}$ is not representative, then \textit{e.g.} $uc_{u,v}va_va_uu$ is an unbalanced $4$-cycle.
\end{itemize}

\medskip

According to all previous arguments, finding a truth assignment not-all-equal satisfying $F$ is equivalent to finding a signed clique from $G_F$, as claimed. So \SSCLIQUEPB~is \NPclass-hard, and hence \NPclass-complete.
\end{proof}

\section{Comparing mixed chromatic number of $(1,0)$-colored and $(0,2)$-colored mixed graphs}\label{o-vs-s}
This comparison is motivated from the fact that for several graph classes the two parameters $\chi_{(1,0)}$  and $\chi_{(0,2)}$
 have the same or similar bounds, some of them tight~\cite{sopena_updated_survey,homsignified}. In most cases the same or similar
 bounds are proved using similar techniques. The similarity is probably due to the fact that in both cases adjacency can be of two types. 
 Thus, there was a reasonable amount of hints indicating an underlying relation between the two parameters. Here we end up proving the opposite, that is, the parameters can  be arbitrarily different. 

\begin{theorem}\label{theorem difference}
Given any integer $n$, there exists an undirected graph $G$ such that 
$\chi_{(0,2)}(G) - \chi_{(1,0)}(G) = n$.
\end{theorem}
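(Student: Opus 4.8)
The plan is to split the problem into a clean \emph{reduction} and a harder \emph{construction} part. Throughout write $D(G) = \chi_{(0,2)}(G) - \chi_{(1,0)}(G)$ for an undirected graph $G$, where, as defined above, each term is the maximum of the relevant colored-mixed chromatic number taken over all $2$-edge-colorings, respectively all orientations, of $G$. The first thing I would record is that both $\chi_{(0,2)}$ and $\chi_{(1,0)}$ are monotone under taking subgraphs, and that the empty graph $\overline{K_N}$ satisfies $\chi_{(0,2)}(\overline{K_N}) = \chi_{(1,0)}(\overline{K_N}) = 1$, so $D(\overline{K_N}) = 0$. Thus the value $0$ is always realised, and the theorem will follow from a discrete intermediate value argument once I control how $D$ can change along a filtration of a graph.

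The engine of the reduction is the following lemma, which I would prove first: for any undirected graph $G$ and any single edge $e$, one has $\chi_{(m,n)}(G) \le \chi_{(m,n)}(G + e) \le \chi_{(m,n)}(G) + 1$. The left inequality is subgraph monotonicity. For the right one, fix a worst $(m,n)$-coloring $c'$ of $G+e$, let $c$ be its restriction to $G$, take an optimal homomorphism $\phi$ from $G_c$ to a target $H$ with $|H| = \chi_{(m,n)}(G_c) \le \chi_{(m,n)}(G)$, and enlarge $H$ by a single new vertex that twins the image of one endpoint of $e$ while carrying the colour or direction of $e$ towards the image of the other endpoint; mapping that endpoint to the new vertex extends $\phi$ to $(G+e)_{c'}$. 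The one delicate point, which I expect to need a short separate argument, is the degenerate case where an endpoint of $e$ has a neighbour already identified with the other endpoint by $\phi$, so that the naive twin is inconsistent and one must instead exhibit a different target of order $|H|+1$. Applying the lemma with $(m,n) = (0,2)$ and with $(m,n) = (1,0)$, each parameter changes by $0$ or $+1$ when an edge is inserted, so $D$ changes by at most $1$ in absolute value along any edge-by-edge filtration.

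With the lemma in hand the reduction is immediate, and it is the key simplification: it turns ``realise $D$ \emph{exactly}'' into ``realise $D$ \emph{unboundedly}''. It now suffices to produce, for the given $n$, a graph $G_+$ with $D(G_+) \ge n$ when $n \ge 0$, and a graph $G_-$ with $D(G_-) \le n$ when $n < 0$. Ordering the edges of $G_+$ (resp.\ $G_-$) and inserting them one at a time, starting from the empty graph on the same vertex set, produces a sequence whose $D$-value begins at $0$, ends beyond $n$ in the correct direction, and moves in steps of size at most $1$; by discrete intermediate value, some intermediate subgraph has $D$ exactly $n$. Hence everything reduces to showing that $D$ is unbounded both above and below.

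The remaining step, constructing those extremal families, is where the genuine work and the main obstacle lie. Here I would exploit the asymmetry between the two models made explicit by the characterisation of special $2$-paths: in the $(1,0)$ setting, with a single arc colour, only directed $2$-paths $u \to w \to v$ force two vertices apart, whereas in the $(0,2)$ setting only \emph{bichromatic} $2$-paths do. The aim is to design graphs in which one regime forces a large relative clique, hence a large chromatic number, while the other permits wholesale identification of vertices, and to let the gap grow with a size parameter: sparse-looking gadgets whose orientations can emulate large oriented cliques (pushing $\chi_{(1,0)}$ up) but all of whose $2$-edge-colorings collapse (keeping $\chi_{(0,2)}$ down), together with a dual gadget for the opposite sign, iterated to drive $D \to +\infty$ and $D \to -\infty$. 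The difficulty is that for such families both parameters are typically large, so one must argue about their \emph{difference} rather than about either parameter in isolation, proving that \emph{every} orientation (respectively \emph{every} $2$-edge-coloring) behaves as designed; this uniform control over all colourings is the crux of the construction.
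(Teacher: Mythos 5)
Your reduction rests on the lemma that $\chi_{(m,n)}(G+e) \le \chi_{(m,n)}(G) + 1$ for a single edge $e$, and this lemma is false. Take $(m,n)=(1,0)$, let $G$ be the path on $5$ vertices and let $e$ be the edge closing it into $C_5$. Every orientation of a path has oriented chromatic number at most $3$, so $\chi_{(1,0)}(G)=3$; but the directed $5$-cycle is an oriented clique (every pair of vertices is adjacent or joined by a directed $2$-path), so $\chi_{(1,0)}(G+e)=5$. One edge insertion jumps the parameter by $2$, so the difference $D$ can also jump by $2$ in a single step and the discrete intermediate value argument collapses. The ``delicate degenerate case'' you flag (a neighbour of one endpoint already identified with the other endpoint, forcing an inconsistent twin) is not a technicality to be patched by ``a different target of order $|H|+1$'' --- it is exactly where the statement fails: in the homomorphism of the directed $P_5$ onto the directed triangle, the new arc would force a digon in any one-vertex extension, and indeed no $4$-vertex target exists.

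The second gap is that even granting the reduction, you never produce the graphs with $D(G)\ge n$ and $D(G)\le n$; your last paragraph openly defers ``the genuine work'' to an unspecified gadget construction. The paper's proof does precisely this missing part, and in a way that makes any intermediate-value step unnecessary: it defines $A+B$ by joining disjoint copies of $A$ and $B$ to one new dominating vertex, shows both $\chi_{(0,2)}$ and $\chi_{(1,0)}$ are exactly additive (plus one) under this operation, and iterates to get $\chi_{(0,2)}(H_k)-\chi_{(1,0)}(H_k)=k\bigl(\chi_{(0,2)}(H)-\chi_{(1,0)}(H)\bigr)$. Seeding with the path of length $5$ (difference $+1$) and the cycle $C_5$ (difference $-1$) then realizes every integer $n$ exactly. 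If you want to salvage your outline, you would need both a correct replacement for the edge-insertion lemma (none is apparent) and an explicit extremal family; at that point the paper's multiplicative construction is strictly simpler.
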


\begin{proof}
All the complete graphs have their oriented chromatic number equal to their signed chromatic number. So, we need to prove $\chi_{(0,2)}(G) - \chi_{(1,0)}(G) = n$ for non-zero integers $n$. 

Let $A$ and $B$ be two undirected graphs. Let $A+B$ be the undirected graph obtained by 
taking disjoint copies of $A$ and $B$ and adding a new vertex $\infty$  adjacent to all the vertices of $A$ and $B$. So,  $A+B$ is the undirected graph dominated by the vertex $\infty$ and $N(\infty)$ is the disjoint union of $A$ and $B$. 

It is easy to observe that 
\begin{align}\nonumber
\chi_{(0,2)}(A+B) \leq \chi_{(0,2)}(A) + \chi_{(0,2)}(B) +1.
\end{align}

Let the edges connecting $\infty$ and the vertices of $A$ recieve the first color. Let the edges connecting $\infty$ and the vertices of $B$ recieve the second color. 
Also retain the edge-colors of the edges of $A$ and $B$ in $A+B$ for which they attain their respective chromatic numbers. Note that we have to use different set of colors for coloring the vertices of $A$ and the vertices of $B$ in this sort of color assignment of edges. Also, we need to use 
at least $\chi_{(0,2)}(A)$ colors to color the vertices of $A$ and at least  $\chi_{(0,2)}(B)$ colors to color the vertices of $B$.
Furthermore, the vertex $\infty$ must recieve a color different from the colors used for any other vertices of $A+B$. Thus, 
\begin{align}\nonumber
\chi_{(0,2)}(A+B) = \chi_{(0,2)}(A) + \chi_{(0,2)}(B) +1.
\end{align}

\medskip

Similarly, consider the orientations $\overrightarrow{A}$, $\overrightarrow{B}$ for which we have 
$\chi_{(1,0)}(\overrightarrow{A}) = \chi_{(1,0)}(A)$ and  $\chi_{(1,0)}(\overrightarrow{B}) = \chi_{(1,0)}(B)$.
Now orient  the edges of $A$ and $B$ in $A+B$ like in $\overrightarrow{A}$ and $\overrightarrow{B}$.
Now orient the edges incedent to the vertex $\infty$ in such a way that we have 
the arcs  $\infty a$ for all $a \in A$ and $b \infty $ for all $b \in B$.  
Note that we have to use different set of colors for coloring the vertices of $A$ and the vertices of $B$ for this sort of an orientation.
Thus,
\begin{align}\label{eqn oriented chi}
\chi_{(1,0)}(A+B) = \chi_{(1,0)}(A) + \chi_{(1,0)}(B) +1.
\end{align}

Let $H$ be an undirected graph. Then we define, by induction, the graph $H_k = H + H_{k-1}$ for 
$k \geq 2$ where $H_1=H$. Note that,

\begin{align}\nonumber
\chi_{(0,2)}(H_k) = k \times \chi_{(0,2)}(H) + (k-1) \text{ and } \chi_{(1,0)}(H_k) = k \times \chi_{(1,0)}(H) + (k-1).
\end{align}

The above two equations imply

\begin{align}\label{eqn signed-oriented}
\chi_{(0,2)}(H_k) - \chi_{(1,0)}(H_k) = k\times(\chi_{(0,2)}(H) - \chi_{(1,0)}(H)).
\end{align}

It is easy to observe that, a path $P_5$ of length 5 (number of edges) has $\chi_{(0,2)}(P_5) = 4$ and $\chi_{(1,0)}(P_5) = 3$.
On the other hand,  a cycle $C_5$ of length 5 has $\chi_{(0,2)}(C_5) = 4$ and $\chi_{(1,0)}(C_5) = 5$.
Hence we have  $\chi_{(0,2)}(P_5) - \chi_{(1,0)}(P_5) = 1$ and $\chi_{(0,2)}(C_5) - \chi_{(1,0)}(C_5) = -1$.

Now by replacing $H$ with $P_5$ and $C_5$ in equation~\ref{eqn signed-oriented} we are done.
\end{proof}

This result shows that the two parameters does not have any additive relation between them. Even though we are not ruling out the possibility of other types of relation between the two parameters. For exapmle, one can try to figure out if the ratio  
$\chi_{(0,2)}(G)/\chi_{(1,0)}(G)$ is bounded or not where $G$ is an undirected simple graph.

\bibliographystyle{abbrv}
\bibliography{NSS14}

\end{document}